\newenvironment{proof}{\noindent {\bf Proof:}}{\hfill $\Box$}
\newtheorem{theorem}{Theorem}
\newtheorem{lemma}{Lemma}
\newtheorem{corollary}{Corollary}
\newtheorem{assumption}{Assumption}
\DeclareMathOperator*{\argmin}{arg\,min}
\title{\bf Convergence rates of moment-sum-of-squares hierarchies for optimal control problems}
\begin{document}

\author{Milan Korda$^1$, Didier Henrion$^{2,3,4}$, Colin N. Jones$^1$}

\footnotetext[1]{Laboratoire d'Automatique, \'Ecole Polytechnique F\'ed\'erale de Lausanne, Station 9, CH-1015, Lausanne, Switzerland. {\tt \{milan.korda,colin.jones\}@epfl.ch}}
\footnotetext[2]{CNRS; LAAS; 7 avenue du colonel Roche, F-31400 Toulouse; France. {\tt henrion@laas.fr}}
\footnotetext[3]{Universit\'e de Toulouse; LAAS; F-31400 Toulouse; France.}
\footnotetext[4]{Faculty of Electrical Engineering, Czech Technical University in Prague,
Technick\'a 2, CZ-16626 Prague, Czech Republic.}

\date{Draft of \today}

\maketitle

\begin{abstract}
We study the convergence rate of moment-sum-of-squares hierarchies of semidefinite programs for optimal control problems with polynomial data. It is known that these hierarchies generate polynomial under-approximations to the value function of the optimal control problem and that these under-approximations converge in the $L^1$ norm to the value function as their degree $d$ tends to infinity. We show that the rate of this convergence is $O(1/\log\log\,d)$. We treat in detail the continuous-time infinite-horizon discounted problem and describe in brief how the same rate can be obtained for the finite-horizon continuous-time problem and for the discrete-time counterparts of both problems.
\end{abstract}

\begin{center}\small
{\bf Keywords:} optimal control, moment relaxations, polynomial sums of squares,
semidefinite programming, approximation theory.
\end{center}

\section{Introduction}

The moment-sum-of-squares hierarchy (also know as Lasserre hierarchy) of semidefinite programs was originally introduced in \cite{l00} in
the context of polynomial optimization. It allows one to solve globally non-convex optimization problems at the price
of solving a sequence, or hierarchy, of convex semidefinite programming problems, with convergence guarantees; see e.g. \cite{l09} for an introductory survey, \cite{l10} for a comprehensive overview and \cite{c10} for control applications.

This hierarchy was extended in \cite{lhpt08} to polynomial optimal control, and later on in \cite{hls09} to global
approximations of semi-algebraic sets, originally motivated by volume and integral estimation problems. The approximation hierarchy
for semi-algebraic sets derived in \cite{hls09} was then transposed and adapted to an approximation hierarchy
for transcendental sets relevant for systems control \cite{c11}, such as regions of attraction \cite{hk14} and maximal invariant sets
for controlled polynomial differential and difference equations \cite{khj14}, still with rigourous analytic convergence guarantees.

Central to the moment-sum-of-squares hierarchies of \cite{lhpt08,hk14,khj14} are polynomial subsolutions of the Hamilton-Jacobi-Bellman equation,
providing certified lower bounds, or under-approximations, of the value function of the optimal control problem.
It was first shown in \cite{lhpt08} that the hierarchy of polynomial subsolutions of increasing degree converges locally
(i.e. pointwise) to the value function on its domain. Later on, as an outcome of the results of \cite{hk14}, global convergence
(i.e. in $L^1$ norm on compact domains, or equivalently, almost uniformly) was established in \cite{hp14}.

The current paper is motivated by the analysis of the rate of convergence of the moment-sum-of-squares hierarchy for static
polynomial optimization achieved in \cite{ns07}. We show that a similar analysis can be carried out in the dynamic
case, i.e. for assessing the rate of convergence of the moment-sum-of-squares hierarchy for polynomial optimal control.
For ease of exposition, we focus on the discounted infinite-horizon continuous-time optimal control problem and briefly describe (in Section~\ref{sec:extension}) how the same convergence rate can be obtained for the finite-time continuous version of the problem and for the discrete counterparts of both problems.

Our main Theorem~\ref{mainresult} gives estimates on the rate of convergence of the polynomial under-approximations
to the value function in the $L^1$ norm. As a direct outcome of this result, we derive in Corollary~\ref{rate} that the
rate of convergence is in $O(1/\log\log\,d)$, where $d$ is the degree of the polynomial approximation. As far as we know, this is the first estimate of this kind in the context of moment-sum-of-squares hierarchies for
polynomial optimal control.


\subsection{Notation}
The set of all continuous functions on a set $X\subset \mathbb{R}^n$ is denoted by $C(X)$;  the set of all $k$-times continuously differentiable functions is denoted by $C^k(X)$. For $h \in C(X)$, we denote $\| h \|_{C^0(X)} := \max_{x \in X} | h(x) |$ and for $h \in C^1(X)$ we denote $\| h\|_{C^1(X)} := \max_{x \in X} | h(x) | + \max_{x \in X} \|  \nabla h(x) \|_2$ where $\nabla h$ is the gradient of $h$. The $L^1$ norm with respect to a measure $\mu_0$ of a measurable function $h:\mathbb{R}^n\to \mathbb{R}$ is denoted by $\| h\|_{L_1(\mu_0)}:=\int_{\mathbb{R}^n} h(x)\mu_0(dx)$. The set of all multivariate polynomials in a variable $x$ of total degree no more than $d$ is denoted by~$\mathbb{R}[x]_d$. The symbol $\mathbb{R}[x]_d^n$ denotes the $n$-fold cartesian product of this set, i.e., the set of all vectors with $n$ entries, where each entry is a polynomial from $\mathbb{R}[x]_d$. The interior of a set~$X \subset \mathbb{R}^n$ is denoted by $\mathrm{Int}(X)$.

\section{Problem setup}

Consider the discounted infinite-horizon optimal control problem
\begin{equation}\label{ocp_basic}
\begin{array}{rclll}
V^\star(x_0) & := & \inf\limits_{u(\cdot),\,x(\cdot)} &  \int_0^\infty e^{-\beta t} l(x(t),u(t))\,dt   \\
&& \hspace{0.6cm} \mathrm{s.t.}  & x(t) = x_0 + \int_0^t f(x(s),u(s))\,ds\,  &\forall t\in [0,\infty)\\
&&& x(t) \in X, \; u(t) \in U  &\forall t\in [0,\infty)
\end{array}
\end{equation}
where $\beta > 0$ is a given discount factor, $f \in \mathbb{R}[x,u]^n_{d_f}$ and $l \in \mathbb{R}[x,u]_{d_l}$
are given multivariate polynomials and the state and input constraint sets $X$ and $U$ are of the form
\[
X = \{ x \in \mathbb{R}^n \mid g_i^X(x) \ge 0, i = 1,\ldots,n_{X} \},
\]
\[
U = \{ u \in \mathbb{R}^m \mid g_i^U(u) \ge 0, i = 1,\ldots,n_{U} \},
\]
where $g_i^X \in \mathbb{R}[x]_{d_i^X}$ and $g_i^U \in \mathbb{R}[u]_{d_i^U}$ are multivariate polynomials. The function $V^*$ in (\ref{ocp_basic}) is called the \emph{value function} of the optimal control problem~(\ref{ocp_basic}).

Let us recall the Hamilton-Jacobi-Bellman inequality
\begin{equation}\label{eq:bellman}
l(x,u) - \beta V(x,u) + \nabla V(x,u) \cdot f(x,u) \ge 0 \quad \forall\,(x,u) \in X\times U
\end{equation}
which plays a crucial role in the derivation of the convergence rates. In particular, for any function $V \in C^1(X)$ that satisfies~(\ref{eq:bellman}) it holds
\begin{equation}\label{eq:lowBoundBasic}
V(x) \le V^\star (x) \quad \forall\, x \in X.
\end{equation}
The following polynomial sum-of-squares optimization problem provides a  sequence of lower bounds to the value function indexed by the degree $d$:
\begin{equation}\label{opt:lb_sos}
\begin{array}{rclll}
 & \max\limits_{V \in \mathbb{R}[x]_d} &  \int_{X} V (x) \, d\mu_0(x)  \\
& \hspace{0.0cm} \mathrm{s.t.}  & l - \beta V + \nabla V \cdot f  \in Q_{d+d_f}(X\times U), \vspace{1mm}
\end{array}
\end{equation}
where $\mu_0$ is a given probability measure supported on $X$ (e.g., the uniform distribution), and
\begin{align*}
Q_{d+d_f}(X\times U)  := \Big\{  & s_0 + \sum_{i=1}^{n_{X}} g_i^X s_X^i + \sum_{i=1}^{n_{U}} g_i^U s_U^i   \: :\: \\
& s_0\in \Sigma_{\lfloor(d+d_f)/2\rfloor  } , s_X^i \in \Sigma_{\lfloor(d + d_f - d_X^i)/2\rfloor  },  
s_U^i \in \Sigma_{\lfloor(d + d_f - d_U^i)/2\rfloor  }
\Big\},
\end{align*}
is the truncated quadratic module associated with the sets $X$ and $U$ (see \cite{l09} or \cite{l10}), where $\Sigma_d$ is the cone of sums of squares of polynomials of degree up to $d$. Note that whenever $V$ is feasible in~(\ref{opt:lb_sos}), then $V$ satisfies Bellman's inequality (\ref{eq:bellman}), because polynomials in $Q_{d+d_f}(X\times U)$ are non-negative on $X\times U$ by construction. Therefore any polynomial $V$ feasible in~(\ref{opt:lb_sos}) satisfies also~(\ref{eq:lowBoundBasic}) and hence is an under-approximation of $V^\star$ on $X$. 

The truncated quadratic module is essential to the proof of convergence of the moment-sum-of-squares hieararchy in the static polynomial optimization case~\cite{l00} which is based on Putinar's Positivstellensatz~\cite{putinar93}. We recall that some polynomials of degree $d+d_f$ non-negative
on $X\times U$ may not belong to $Q_{d+d_f}(X\times U)$~\cite{l10}. On the other hand, optimizing over the polynomials belonging to $Q_{d+d_f}(X\times U)$ is ``simple'' (it translates to semidefinite programming) while optimizing over the cone of non-negative polynomials is very difficult in general. In particular, the optimization problem~(\ref{opt:lb_sos}) translates to a finite-dimensional semidefinite programming problem (SDP). The fact that the truncated quadratic module has an explicit SDP representation and hence can be tractably optimized over is one of the main reasons for the popularity of the moment-sum-of-squares hierarchies across many fields of science.

Throughout the paper we impose the following standing assumptions.
\begin{assumption}\label{as:main} The following conditions hold:
\begin{enumerate}
  \item[(a)] $X \subset [-1,1]^{n}$ and $U \subset [-1,1]^{m}$.
   \item[(b)] The sets of polynomials $(g_i^X)_{i=1}^{n_X}$ and $(g_i^U)_{i=1}^{n_U}$ both satisfy the Archimedian condition\footnote{A sufficient condition for a set of polynomials $(g_i)_{i=1}^n$ to satisfy the Archimedian condition is $g_i = N - \| x\|_2^2$ for some $i$ and some $N \ge 0$, which is a non-restrictive condition provovided that the set defined by $g_i$'s is compact and an estimate of its dimeter is known. For a precise definition of this condition see Section 3.6.2 of~\cite{l09}.}.
   \item[(c)] $0 \in \mathrm{Int}(X)$ and $0 \in \mathrm{Int}(U)$.
   \item[(d)]  The function $\nabla V^\star$ is Lipschitz continuous on $X$.
   \item [(e)] The set $f(x,U)$ is convex for all $x \in X$ and the function $v\mapsto \inf\limits_{u\in U}\{ l(x,u) \mid v = f(x,u)  \}$ is convex for all $x \in X$.
 \end{enumerate}
\end{assumption}

The Assumption $(a)$ and $(b)$ are made without loss of generality since the sets $X$ and $U$ are assumed to be compact and hence can be scaled such that they are included in the unit ball; adding redundant ball constraints $1-\|x\|^2$ and $1-\|u\|^2$ in the description of $X$ and $U$ then implies the Archimedianity condition. Assumption $(c)$ essentially requires that the sets $X$ and $U$ have nonempty interiors (a mild assumption) since then a change of coordinates can always be carried out such that the origin is in the interior of these sets. Assumption $(d)$ is an important regularity assumption necessary for the subsequent developments. Assumption $(e)$ is a standard assumption ensuring that the value function of the so-called relaxed formulation of the problem~(\ref{opt:lb_sos}) coincides with $V^\star$ (see, e.g., \cite{vinter1993}) and is satisfied, e.g., for input-affine\footnote{A system is input-affine if $f(x,u) = f_x(x) + f_u(x) u$ for some functions $f_x$ and $f_u$.} systems with input-affine cost function provided that $U$ is convex. This class of problems is by far the largest and practically most relevant for which this assumption holds although other problems exist that satisfy this assumption as well\footnote{For example, consider $l(x,u) = x^2$, $f(x,u) = x + u^2$, $U = [-1,1]$.}.

Under Assumption~\ref{as:main}, the hierarchy of lower bounds generated by problem~(\ref{opt:lb_sos}) converges from below in the $L^1$ norm to the value function~$V^\star$; see e.g. \cite{hp14}:

\begin{theorem}
There exists a $d_0 \ge 0$ such that the problem~(\ref{opt:lb_sos}) is feasible for all $d\ge d_0$. In addition $V \le V^\star$ for any $V$ feasible in~(\ref{opt:lb_sos}) and $\lim_{d\to \infty}   \|V^\star - V_d^\star  \|_{L_1(\mu_0)}  = 0$, where $V_d^\star$ is an optimal solution to~(\ref{opt:lb_sos}).
\end{theorem}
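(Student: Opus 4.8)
The plan is to follow the standard convergence argument for moment-sum-of-squares hierarchies (as in~\cite{hp14}, and~\cite{l00} in the static case): produce a polynomial that is $C^1$-close to $V^\star$, shift it slightly downwards so that the associated Hamilton--Jacobi--Bellman expression becomes \emph{strictly} positive on $X\times U$, and then invoke Putinar's Positivstellensatz to certify membership in the truncated quadratic module. The lower-bound property is essentially already observed in the text: if $V\in\mathbb{R}[x]_d$ is feasible in~(\ref{opt:lb_sos}) then $l-\beta V+\nabla V\cdot f\in Q_{d+d_f}(X\times U)$, every element of which is non-negative on $X\times U$ since the $g_i^X,g_i^U$ are non-negative there and the multipliers are sums of squares; hence $V$ satisfies~(\ref{eq:bellman}) and so $V\le V^\star$ on $X$ by~(\ref{eq:lowBoundBasic}). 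Because $\mu_0$ is a probability measure supported on $X$, this also gives $\|V^\star-V\|_{L_1(\mu_0)}=\int_X(V^\star-V)\,d\mu_0\ge 0$ for every feasible $V$, so the convergence claim reduces to exhibiting, for each $\varepsilon>0$, a feasible $V$ with $\int_X(V^\star-V)\,d\mu_0\le\varepsilon$.

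To construct such a $V$, I would first note that $V^\star$ is itself a subsolution: by Assumption~\ref{as:main}(e) it coincides with the value function of the relaxed formulation of~(\ref{ocp_basic}) and therefore satisfies $\inf_{u\in U}\{l(x,u)-\beta V^\star(x)+\nabla V^\star(x)\cdot f(x,u)\}=0$ on $X$, and since $V^\star\in C^1(X)$ by Assumption~\ref{as:main}(d) this yields $l-\beta V^\star+\nabla V^\star\cdot f\ge 0$ on $X\times U$. Using Assumption~\ref{as:main}(d) once more, $V^\star$ (being $C^{1,1}$ on $X$) extends to a $C^1$ function on $[-1,1]^n\supseteq X$ by Whitney extension, so simultaneous Weierstrass approximation produces, for each $\varepsilon>0$, a polynomial $p_\varepsilon$ with $\|V^\star-p_\varepsilon\|_{C^1(X)}\le\varepsilon$. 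Setting $M:=\max_{(x,u)\in X\times U}\|f(x,u)\|_2$, $c_\varepsilon:=\tfrac{2}{\beta}(\beta+M)\varepsilon$ and $\tilde p_\varepsilon:=p_\varepsilon-c_\varepsilon$, and using that a constant has zero gradient, one has
\[
l-\beta\tilde p_\varepsilon+\nabla\tilde p_\varepsilon\cdot f=\big(l-\beta V^\star+\nabla V^\star\cdot f\big)+\beta(V^\star-p_\varepsilon)+(\nabla p_\varepsilon-\nabla V^\star)\cdot f+\beta c_\varepsilon,
\]
and, the first summand being non-negative on $X\times U$ and the next two bounded there by $\beta\varepsilon$ and $M\varepsilon$ in absolute value, this polynomial is $\ge(\beta+M)\varepsilon>0$ on $X\times U$.

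Being strictly positive on the compact set $X\times U$, whose quadratic module is Archimedean by Assumption~\ref{as:main}(a)--(b) (the ball certificates $N_1-\|x\|_2^2$ and $N_2-\|u\|_2^2$ in the respective modules combine to $N_1+N_2-\|x\|_2^2-\|u\|_2^2$), the polynomial $l-\beta\tilde p_\varepsilon+\nabla\tilde p_\varepsilon\cdot f$ belongs, by Putinar's Positivstellensatz~\cite{putinar93}, to $Q_k(X\times U)$ for every $k\ge k_\varepsilon$ with $k_\varepsilon$ depending only on $\varepsilon$; hence $\tilde p_\varepsilon$ is feasible in~(\ref{opt:lb_sos}) once $d\ge\deg p_\varepsilon$ and $d+d_f\ge k_\varepsilon$, and taking $\varepsilon=1$ furnishes the threshold $d_0$ of the first assertion. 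For the convergence assertion, fix $\varepsilon>0$ and take $d$ large enough that $\tilde p_\varepsilon$ is feasible; then optimality of $V_d^\star$ together with the lower-bound property gives
\[
0\le\|V^\star-V_d^\star\|_{L_1(\mu_0)}=\int_X(V^\star-V_d^\star)\,d\mu_0\le\int_X(V^\star-\tilde p_\varepsilon)\,d\mu_0\le\|V^\star-\tilde p_\varepsilon\|_{C^0(X)}\le\varepsilon+c_\varepsilon,
\]
and letting $d\to\infty$ and then $\varepsilon\to0$ (so $c_\varepsilon\to0$) concludes.

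I expect the $C^1$ polynomial approximation of $V^\star$ to be the crux: it is what makes the regularity Assumption~\ref{as:main}(d) indispensable, and although for this qualitative statement only the existence of \emph{some} admissible degree $d(\varepsilon)$ is needed, the quantitative Theorem~\ref{mainresult} must replace it by an explicit Jackson-type approximation rate together with an explicit degree bound in Putinar's representation — this is where one imports the static-case analysis of~\cite{ns07}, and it is the technical heart of the paper. The remaining points are routine: the verification that Putinar's representation respects the degree truncation defining $Q_{d+d_f}(X\times U)$ for $d$ large, and — to sidestep an attainment argument for the supremum in~(\ref{opt:lb_sos}), which may fail for degenerate $\mu_0$ — the observation that the $L^1$ bound above still applies if $V_d^\star$ is taken only $\delta_d$-optimal with $\delta_d\to0$.
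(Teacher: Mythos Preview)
Your proposal is correct and follows essentially the same approach as the one the paper relies on. Note that the paper does not actually prove this theorem in the text---it cites \cite{hp14}---but your argument is precisely the standard one from that reference, and it is also the qualitative skeleton of the paper's own quantitative Theorem~\ref{mainresult}: your shift construction $\tilde p_\varepsilon=p_\varepsilon-c_\varepsilon$ is exactly Lemma~\ref{lem:dp_bound} (with the same constant, up to the factor of $2$ you insert for strict positivity, which the paper obtains instead via a second shift by $\epsilon/2$ in the proof of Theorem~\ref{mainresult}), and your appeal to Putinar is what the paper later makes effective via Theorem~\ref{thm:nie}. Your closing paragraph correctly anticipates this.
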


The goal of this paper is to derive bounds on the convergence rate of $V_d^\star$ to $V^\star$.

\section{Convergence rate}

The convergence rate is a consequence of the following fundamental results from approximation theory and polynomial optimization.
\begin{theorem}[Bagby et al.~\cite{bagby}]\label{thm:bagby}
If $h :  X \to \mathbb{R}$ is a function such that $\nabla h \in C^1(X)$, then there exists a sequence of polynomials $(p_d)_{d = 1}^\infty$ satisfying $\mathrm{deg}(p_d) \le d$ such that $\| h - p_d \|_{C^1(X)} \le c_1 / d$ for some constant $c_1 \ge 0$ depending on $h$ and $X$ only.
\end{theorem}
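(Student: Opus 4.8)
The plan is to reduce Theorem~\ref{thm:bagby} to a simultaneous Jackson-type approximation estimate on a cube and then exhibit the polynomials $p_d$ through a single polynomial smoothing operator. Throughout I read the hypothesis $\nabla h \in C^1(X)$ as saying that $h$ is the restriction to the compact set $X$ of a function whose partial derivatives up to order two are continuous on a neighbourhood of $X$, and I abbreviate $M := \|h\|_{C^0(X)} + \sum_{i=1}^n \|\partial_i h\|_{C^1(X)}$, a finite quantity built from the norms already defined in the paper that bounds $h$ together with all its partial derivatives of order at most two.

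First I would reduce to a cube. Since $X$ is compact, by Assumption~\ref{as:main}(a) we have $X \subset [-1,1]^n \subset C := [-2,2]^n$, and a bounded $C^2$ extension operator (Whitney--Stein) replaces $h$ by a function $\tilde h$ that is twice continuously differentiable on $C$, coincides with $h$ on a neighbourhood of $X$, and satisfies $\|\tilde h\|_{C^2(C)} \le \kappa_X M$ with $\kappa_X$ depending on $X$ only. Since the restriction to $X$ of a polynomial of degree $\le d$ is again of degree $\le d$, it suffices to approximate $\tilde h$ on $C$ in $C^1$ norm. For this I would take $L_d$ to be the $n$-fold tensor product of a one-dimensional algebraic polynomial smoothing (Jackson-type) operator of degree $\lfloor d/n\rfloor$ on $[-2,2]$, so that $\mathrm{deg}\,L_d g \le n\lfloor d/n\rfloor \le d$, and use its three standard properties, with constants depending on $n$ only: (i) it reproduces affine functions; (ii) it satisfies the order-two bound $\|g - L_d g\|_{C^0(C)} \le c\,d^{-2}\|g\|_{C^2(C)}$; and (iii) it satisfies the simultaneous first-derivative bound $\|\partial_i(g - L_d g)\|_{C^0(C)} \le c\,d^{-1}\|g\|_{C^2(C)}$ for $i = 1,\dots,n$. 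Here (ii) comes from (i) by a second-order Taylor expansion integrated against the kernel (whose first moment vanishes and whose second moment is $O(d^{-2})$). Setting $p_d := L_d\tilde h$ and combining (ii)--(iii),
\[
\|h - p_d\|_{C^1(X)} \;\le\; \|\tilde h - p_d\|_{C^1(C)} \;\le\; c\,(d^{-2}+d^{-1})\,\|\tilde h\|_{C^2(C)} \;\le\; \frac{c_1}{d}, \qquad c_1 := 2c\,\kappa_X\,M,
\]
and $c_1$ depends on $h$ and $X$ only, which is the assertion.

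The hard part is property (iii): differentiation does not commute with polynomial smoothing on a bounded interval. In a periodic (trigonometric) or full-space convolution model the kernel's derivative is again a kernel, so $\partial_i L_d g$ is a smoothing of the $C^1$ function $\partial_i g$ and the $d^{-1}$ bound is immediate; but on $[-1,1]$ the substitution $x = \cos\theta$ that passes to the trigonometric model introduces the weight $(1-x^2)^{-1/2}$, which blows up at the endpoints $\pm 1$. One therefore has to show that the error of the periodic Jackson operator applied to $\partial_i g$ decays fast enough near $\pm 1$ to absorb this weight, which is done via weighted Markov--Bernstein inequalities for algebraic polynomials. This endpoint analysis is the technical core of the argument of Bagby et al.~\cite{bagby}; the remaining ingredients --- the extension step, the tensorization, and the elementary Taylor estimate (ii) --- are routine, and the constants they contribute can be tracked and folded into $c_1$.
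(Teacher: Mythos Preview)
The paper does not prove Theorem~\ref{thm:bagby}: it is quoted verbatim as a known result of Bagby, Bos and Levenberg~\cite{bagby} and used as a black box in the proof of Theorem~\ref{mainresult}. There is therefore no proof in the paper to compare your proposal against.

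As a standalone sketch, your outline is reasonable and in the right spirit --- reduce to a cube via a $C^2$ extension, then apply a tensorised Jackson-type operator and invoke a simultaneous-approximation estimate for the first derivatives. You also correctly identify the only nontrivial point, namely that on a bounded interval differentiation does not commute with the smoothing operator, so the derivative bound (your property~(iii)) requires a genuine argument near the endpoints. That is indeed the content of~\cite{bagby}, and your proposal ultimately defers to that reference for the core step, so what you have written is more an annotated citation than an independent proof. If the intent is simply to justify invoking Theorem~\ref{thm:bagby}, a one-line reference to~\cite{bagby} (as the paper does) suffices; if the intent is to supply a self-contained proof, property~(iii) would need to be established in full rather than described.
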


Now we turn to the second fundamental result. Given a polynomial $p \in  \mathbb{R}[x]_d$ expressed in a multivariate monomial basis as 
\[
 p(x) = \sum_{\substack{\alpha \in \mathbb{N}^n\\
                  | \alpha | \le d}} \beta_\alpha x^\alpha
\]
with $|\alpha| = \sum_{i=1}^n \alpha_i$ and $x^\alpha = \prod_{i=1}^n x^{\alpha_i} $, we define
\begin{equation}\label{eq:Rnorm}
\| p\|_{\mathbb{R}[x]} = \max_{\alpha} \frac{|\beta_\alpha|}{\binom{|\alpha|}{\alpha}},
\end{equation}
where the multinomial coefficient $\binom{|\alpha|}{\alpha}$ is defined by
\[
\binom{|\alpha|}{\alpha} := \frac{| \alpha|!}{\alpha_1 ! \cdot \ldots \cdot \alpha_n!}.
\]

\begin{theorem}[Nie \& Schweighofer ~\cite{ns07}]\label{thm:nie}
Let $p \in \mathbb{R}[x,u]_{d_p}$ and let
\[
p_{\mathrm{min}} :=  \min_{(x,u)\in X\times U}p(x,u)  \quad \text{with}\quad  p_{\mathrm{min}} > 0.
\]
Then $p \in Q_d(X\times U)$ provided that
\begin{equation}\label{eq:schweigBound}
d \ge c_2 \, \mathrm{exp} \Big ( d_p^2 (n+m)^{d_p}\frac{\| p\|_{\mathbb{R}[x,u]}}{p_\mathrm{min}}  \Big)^{c_2}, 
\end{equation}
where the constant $c_2$ depends only on the sets $X$ and $U$.
\end{theorem}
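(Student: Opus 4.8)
\noindent\emph{Proof proposal.} Since $X\times U$ is compact and, by Assumption~\ref{as:main}(b), the families $(g_i^X)$ and $(g_i^U)$ satisfy the Archimedean condition, Putinar's Positivstellensatz already guarantees $p\in Q_d(X\times U)$ for \emph{some} finite $d$; the entire content of the statement is to exhibit an explicit admissible $d$ depending only on $d_p$, $n+m$, $\|p\|_{\mathbb{R}[x,u]}$, $p_{\min}$ and on the sets $X,U$. I would organise the argument in three stages: (i) reduce the representation problem on the semialgebraic set $X\times U$ to the much easier problem of representing positive polynomials on a ``model'' domain --- a ball or box containing $X\times U$, available because $X\times U\subset[-1,1]^{n+m}$; (ii) construct by hand a representation of a suitable perturbation of $p$ on the model domain using quantitative polynomial approximation; (iii) bound the degrees produced in (i)--(ii) in terms of the stated data.

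For stage (i), the Archimedean hypothesis supplies ball constraints $N_X-\|x\|_2^2$ and $N_U-\|u\|_2^2$ lying in the truncated quadratic modules of $(g_i^X)$ and $(g_i^U)$ of some explicit degree; combining them yields $\theta:=N-\|(x,u)\|_2^2\in Q_{d_0}(X\times U)$ with $d_0$ explicit and depending only on $X,U$. Having $\theta$ available in the linear part of the quadratic module is precisely what allows one to convert a Schm\"udgen-type certificate (involving products of the $g_i$'s) into a Putinar-type certificate (linear in the $g_i$'s): the products can be linearised at the cost of multiplying by powers of $\theta$, with the degree increase controlled by uniform bounds on the generators over the ambient ball. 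The subtlety is that $p$ itself need not be positive \emph{outside} $X\times U$, so one cannot simply invoke a model-domain Positivstellensatz for $p$ directly.

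For stage (ii), I would split $p=q+r$, where $q$ is an explicit polynomial manufactured from $p$ that is nonnegative on the whole model ball --- obtained, for instance, from a uniform polynomial approximant of $\min(p,p_{\min})$ corrected by a suitable power of $\theta$, or by adding a \L{}ojasiewicz-type term --- while the uniform norm of the remainder $r$ on $X\times U$ is pushed below $p_{\min}/2$. The slack $p-\tfrac12 p_{\min}\ge\tfrac12 p_{\min}>0$ on $X\times U$ is then used to absorb $r$, and the resulting globally nonnegative polynomial is represented through the classical explicit SOS-type certificates on the box, ball or simplex (Bernstein- and P\'olya-type constructions), whose degree is polynomial in the degree of the approximant and in the reciprocal of the available gap. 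The multinomial-normalised norm $\|p\|_{\mathbb{R}[x,u]}$ enters exactly here: it is the quantity that governs how high a degree is needed for a polynomial approximant of $p$ (and of its truncations) to be uniformly accurate, while $p_{\min}$ fixes the required accuracy. Tracking these two quantities through the construction yields a bound of the announced form $d\ge c_2\exp\big(d_p^2(n+m)^{d_p}\,\|p\|_{\mathbb{R}[x,u]}/p_{\min}\big)^{c_2}$.

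The main obstacle is the bookkeeping of the \emph{compounding} blow-ups. The passage from a general Archimedean module to the model domain, the conversion of a Schm\"udgen certificate into a Putinar certificate, and the approximation step each contribute their own exponential factor, and one must verify that their composition stays of the claimed double-exponential shape. More delicate still is to guarantee that the constant $c_2$ can be chosen \emph{uniformly} --- depending on the geometry of $X$ and $U$ (through a \L{}ojasiewicz exponent attached to the quadratic module) but \emph{not} on the particular polynomial $p$. Making that exponent explicit and uniform over the domain, and ensuring that every intermediate approximant is controlled by $\|p\|_{\mathbb{R}[x,u]}$ rather than by a cruder norm of $p$, is where essentially all the work of \cite{ns07} resides.
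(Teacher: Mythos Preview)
The paper does not prove Theorem~\ref{thm:nie} at all: it is quoted verbatim from Nie and Schweighofer~\cite{ns07} and used as a black box in the proof of Theorem~\ref{mainresult}. There is therefore no ``paper's own proof'' to compare your proposal against.

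Your outline is a fair high-level summary of the strategy in~\cite{ns07} --- reduction to a model domain, an explicit Bernstein/P\'olya-type certificate there, and conversion of a Schm\"udgen representation into a Putinar one using the Archimedean ingredient, with careful degree bookkeeping throughout. That said, for the purposes of the present paper none of this is needed: the theorem is invoked, not re-derived, and the only way it enters the analysis is through the explicit inequality~(\ref{eq:schweigBound}). If you are writing up the paper's results, you should simply cite~\cite{ns07} and move on; if instead your intent is to reprove the Nie--Schweighofer bound, be aware that your sketch still leaves the hardest technical step --- the quantitative \L{}ojasiewicz-type control that makes $c_2$ depend only on $X\times U$ --- essentially as a pointer to~\cite{ns07} rather than an argument.
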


In the following developments it will be crucial to bound the norm $\| \cdot\|_{\mathbb{R}[x]}$ of a polynomial by its supremum norm $\| \cdot\|_{C(X)}$. We remark that such a bound is possible only for a ``generic'' set $X$ such that any polynomial vanishing on $X$ necessarily vanishes everywhere. A sufficient condition for this is $\mathrm{Int}(X) \neq \emptyset$. This is the reason for Assumption~\ref{as:main} $(c)$.
%

\begin{lemma}\label{lem:normBound1}
If $p \in \mathbb{R}[x]_d$, $x\in \mathbb{R}^n$, then 
\begin{equation}\label{eq:norm_eq}
\|p \|_{\mathbb{R}[x]} \le 3^{d+1} \| p \|_{C([-1,1]^n)}
\end{equation}
for all $d \ge 0$.
\end{lemma}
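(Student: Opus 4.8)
The plan is to estimate each monomial coefficient of $p$ separately. Since $\|p\|_{\mathbb{R}[x]} = \max_\alpha |\beta_\alpha|/\binom{|\alpha|}{\alpha}$, it suffices to show
\[
|\beta_\alpha| \;\le\; \binom{|\alpha|}{\alpha}\,3^{d+1}\,\|p\|_{C([-1,1]^n)}
\]
for every multi-index $\alpha$ with $|\alpha| =: k \le d$. First I would reduce to $\alpha$ of full support: substituting $x_i = 0$ for every $i \notin \mathrm{supp}(\alpha)$ produces a polynomial $\tilde p$ in $r := \#\mathrm{supp}(\alpha)$ variables, of total degree $\le d$, with $\|\tilde p\|_{C([-1,1]^r)} \le \|p\|_{C([-1,1]^n)}$, whose coefficient of $\prod_{i} x_i^{\alpha_i}$ equals $\beta_\alpha$ and whose relevant multinomial coefficient is unchanged; so we may assume $\alpha_i \ge 1$ for all $i$ and $n = r$.

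Next I would expand $\tilde p$ in the tensor Chebyshev basis, $\tilde p = \sum_\gamma \hat p_\gamma\, T_{\gamma_1}(x_1)\cdots T_{\gamma_n}(x_n)$. The orthogonality formula for the coefficients together with $\|T_m\|_{C([-1,1])} = 1$ gives $|\hat p_\gamma| \le 2^{\#\mathrm{supp}(\gamma)}\|\tilde p\|_{C([-1,1]^r)}$, and every $\gamma$ contributing to $\beta_\alpha$ satisfies $\gamma_i \ge \alpha_i \ge 1$, hence has full support, so $|\hat p_\gamma| \le 2^{r}\|p\|_{C([-1,1]^n)}$. Reading off the coefficient of $\prod_i x_i^{\alpha_i}$ and using the explicit formula $|[t^j]T_m| = \tfrac{m}{m+j}\binom{(m+j)/2}{(m-j)/2}\,2^{j} \le 2^{j}\binom{(m+j)/2}{j}$ (valid for $0 < j \le m$ with $m \equiv j \bmod 2$) yields
\[
|\beta_\alpha| \;\le\; 2^{r+k}\,\|p\|_{C([-1,1]^n)} \sum_{\gamma}\ \prod_{i=1}^{r}\binom{(\gamma_i+\alpha_i)/2}{\alpha_i},
\]
where the sum runs over $\gamma \in \mathbb{N}^r$ with $\gamma_i \ge \alpha_i$, $\gamma_i \equiv \alpha_i \bmod 2$, and $|\gamma| \le d$.

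The crucial point is that this combinatorial sum collapses to a single binomial coefficient. Writing $\gamma_i = \alpha_i + 2\ell_i$, the summand becomes $\prod_i \binom{\alpha_i+\ell_i}{\ell_i}$ and the constraint becomes $\sum_i \ell_i \le L := \lfloor(d-k)/2\rfloor$; using $\sum_{\ell_i \ge 0}\binom{\alpha_i+\ell_i}{\ell_i}z^{\ell_i} = (1-z)^{-(\alpha_i+1)}$ and multiplying over $i$ gives the generating function $(1-z)^{-(k+r)}$, so that coefficient extraction followed by the hockey-stick identity gives $\sum_{\gamma}\prod_i\binom{(\gamma_i+\alpha_i)/2}{\alpha_i} = \binom{k+r+L}{L}$. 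Hence $|\beta_\alpha| \le 2^{r+k}\binom{k+r+L}{L}\,\|p\|_{C([-1,1]^n)}$.

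The remaining task — and the step I expect to be the main obstacle — is the elementary but delicate inequality
\[
2^{r+k}\binom{k+r+L}{L} \;\le\; \binom{k}{\alpha}\,3^{d+1} \;=\; \frac{k!}{\prod_i \alpha_i!}\,3^{d+1}
\]
for all $1 \le r \le k \le d$ and all $\alpha$ with $\alpha_i \ge 1$, $\sum_i \alpha_i = k$, $L = \lfloor(d-k)/2\rfloor$. This is where the constant $3$ is consumed: one checks that after dividing by $k!/\prod_i\alpha_i!$ the left-hand side never exceeds $3^{d+1}$, the extreme cases being $r = 1$ (a single variable, where the estimate reduces to the classical bound on one Chebyshev-to-monomial coefficient, of order $(1+\sqrt{2})^{d}$) and the maximally spread multi-indices $\alpha = (1,\dots,1)$ (where the factorial $k!$ in the denominator dominates the growth). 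What makes the estimate uniform in $n$ is precisely the generating-function identity above, which bundles all $r$ support variables into a single binomial coefficient instead of a product of $r$ separate sums, each of which would have cost an extra factor polynomial in $d$.
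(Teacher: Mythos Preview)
Your approach is genuinely different from the paper's and the combinatorial reduction you carry out is correct up to the point where you state the final inequality
\[
2^{\,r+k}\binom{k+r+L}{L}\ \le\ \binom{k}{\alpha}\,3^{d+1},
\qquad L=\Big\lfloor\frac{d-k}{2}\Big\rfloor,\ \ \alpha_i\ge 1,\ \ |\alpha|=k,\ \ r\le k\le d.
\]
But this is precisely the step that carries the whole weight of the lemma, and you do not prove it: ``one checks'' followed by a description of two extreme cases ($r=1$ and $\alpha=(1,\dots,1)$) is not a proof. The inequality is \emph{not} monotone in any obvious parameter, and the worst case is not at either extreme you name; for fixed $r$ and $k$ the minimum of $\binom{k}{\alpha}$ over admissible $\alpha$ is $k!/(k-r+1)!$ (attained at $\alpha=(k-r+1,1,\dots,1)$), so you actually need
\[
2^{\,r+k}\binom{k+r+L}{L}\ \le\ \frac{k!}{(k-r+1)!}\,3^{d+1}
\]
uniformly in $1\le r\le k\le d$, and the interplay between the three parameters is delicate. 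Until you supply a uniform argument (e.g.\ an explicit induction in $d$, or a careful optimisation over $r,k$), the proof is incomplete.

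By contrast, the paper avoids this combinatorics entirely. It invokes a multivariate Markov--type inequality due to Skalyga which bounds $\big|\partial^{|\alpha|}p/\partial x^\alpha(0)\big|$, and hence $|\beta_\alpha|$, directly in terms of $\|p\|_{C([-1,1]^n)}$ and the derivatives at $0$ of a \emph{single} univariate Chebyshev polynomial $T_d$. After that, all that remains is to bound $\max_k|t_{d,k}|$, the largest monomial coefficient of $T_d$, which follows from the recurrence $T_{d+1}=2yT_d-T_{d-1}$ via the scalar majorant $\tilde t_{d+1}=2\tilde t_d+\tilde t_{d-1}$, giving $\max_k|t_{d,k}|\le (1+\sqrt 2)^d\le 3^d$. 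The price is an external black box (Skalyga's theorem); the gain is that the multivariate bookkeeping disappears and the constant $3$ falls out of a two--term linear recurrence rather than a three--parameter combinatorial inequality.
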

\begin{proof}
The idea is to use a multivariate Markov inequality to bound the derivatives of the polynomial at zero (and hence its coefficients) in terms of its supremum norm on $[-1,1]^n$.

Let $p = \sum_{\alpha} \beta_\alpha x^\alpha \in \mathbb{R}[x]_d$. From \cite[Theorem~6]{skalyga}, we have
\[
 \Big | \frac{\partial^{|\alpha|} p}{\partial x^\alpha}(0) \Big|  \le  |  T_d^{(|\alpha|)}(0) + i S_d^{(|\alpha|)}(0) | \cdot \| p\|_{C([-1,1]^n)}
\]
for all multiindices $\alpha$ satisfying $|\alpha| \le d$, where $i = \sqrt{-1}$, $T_d(y) = \cos(d \arccos(y))$, $y\in[-1,1]$, denotes the $d$-th univariate Chebyshev polynomial of the first kind, $S_d(y) = \sin(d \arccos(y)) = d^{-1}\sqrt{1-x^2}T_d'(y)$, $y \in [-1,1]$, and $h^{(k)}$ signifies the $k$-th derivative of a function $h : \mathbb{R}\to \mathbb{R}$. It is easy to see that $S_d^{(k)}(0) = d^{-1}T_d^{(k+1)}(0)$ and hence
\begin{align*}
|  T_d^{(|\alpha|)}(0) + i S_d^{(|\alpha|)}(0) |  &\le  |  T_d^{(|\alpha|)}(0) | +  \frac{1}{d}|  T_d^{(|\alpha|+1)}(0) | = |\alpha|! \cdot |t_{d,|\alpha|}| + \frac{(|\alpha|+1)!}{d} \cdot |t_{d,|\alpha|+1}| \\  &\le \Big[|\alpha|! + \frac{(|\alpha|+1)!}{d}\Big]\bar{t}_{d},
\end{align*}
where $t_{d,k}$ denotes the $k$-th coefficient of $T_d$ when expressed in the monomial basis (i.e., $T_d(y) = \sum_{k=0}^d t_{d,k} y^k$) and $\bar{t}_d = \max_{k\in\{0,\ldots,d\}} |t_{d,k}|$.

Since $\beta_\alpha = (\alpha_1 ! \cdot \ldots \cdot \alpha_n!)^{-1} \frac{\partial^{|\alpha|} p}{\partial x^\alpha}(0)$, we get
\begin{align*}
\frac{|\beta_\alpha|}{\binom{|\alpha|}{\alpha}} &= \frac{(\alpha_1 ! \cdot \ldots \cdot \alpha_n!)|\beta_\alpha|}{|\alpha|!} = \frac{1}{|\alpha|!}\Big | \frac{\partial^{|\alpha|} p}{\partial x^\alpha}(0) \Big| \le \Big[1 +\frac{|\alpha|+1}{d} \Big] \bar{t}_d \| p\|_{C([-1,1]^n)}.
\end{align*}
In view of~(\ref{eq:Rnorm}) and since $|\alpha| \le d$ we get
\[
\| p \|_{\mathbb{R}[x]} \le \Big[2 + \frac{1}{d}\Big] \bar{t}_d \| p\|_{C([-1,1]^n)}.
\]
It remains to bound $\bar{t}_d$. From the generating recurrence of $T_{d+1}(y) = 2y T_d(y) - T_{d-1}(y) $ starting from $T_0 = 1$ and $T_1 = y$, it follows that $\bar{t}_d \le \tilde{t}_d$, where $\tilde{t}_d$ solves the linear difference equation
\[
\tilde{t}_{d+1} = 2\tilde{t}_d + \tilde{t}_d
\]
 with the initial condition $\tilde{t}_0 = 1$ and $\tilde{t}_1 = 1$. The solution to this equation is
 \[
 \tilde{t}_d = (1+\sqrt{2})^d\Big(\frac{\sqrt{2}}{2} + \frac{1}{2}\Big) + (1-\sqrt{2})^d\Big(\frac{1}{2} - \frac{\sqrt{2}}{2}\Big) \le 3^d,\quad d\ge 1.
 \]
Therefore $\bar{t}_d \le 3^d$ for $d \ge 1$ and hence
\[
\| p \|_{\mathbb{R}[x]} \le \Big[2 + \frac{1}{d}\Big] 3^d \| p\|_{C([-1,1]^n)} \le 3^{d+1} \| p\|_{C([-1,1]^n)},\quad d\ge 1.
\]
Since $\| p \|_{\mathbb{R}[x]} =  \| p\|_{C([-1,1]^n)}$ for $d = 0$, the result follows.
\end{proof}

In order to state an immediate corollary of this result, crucial for subsequent developments, we define
\begin{equation}\label{eq:rdef}
 r := \frac{1}{\sup\{s > 0 \mid [-s,s]^{n+m} \subset X\times U \}},
 \end{equation}
which is the reciprocal value of the length of the side of the largest box centered around the origin included in $X\times U$. By Assumption~\ref{as:main} $(a)$ and $(c)$, we have $r \in [1,\infty)$.

\begin{corollary}\label{cor:norms}
If $p \in \mathbb{R}[x,u]_d$, then
\begin{equation}\label{eq:norm_eq}
\|p \|_{\mathbb{R}[x,u]} \le k(d) \| p \|_{C(X\times U)},
\end{equation}
where $k(d) = 3^{d+1} r^d $ with $r$ defined in~(\ref{eq:rdef}).

\end{corollary}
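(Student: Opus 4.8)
The plan is to reduce the statement to Lemma~\ref{lem:normBound1} by a rescaling argument. Put $s := 1/r = \sup\{s > 0 \mid [-s,s]^{n+m} \subseteq X\times U\}$, which by Assumption~\ref{as:main}~(a),(c) satisfies $s \in (0,1]$. Given $p \in \mathbb{R}[x,u]_d$, write it in the monomial basis of the variable $z := (x,u) \in \mathbb{R}^{n+m}$ as $p(z) = \sum_{|\alpha|\le d}\beta_\alpha z^\alpha$, and introduce the rescaled polynomial $\tilde p(y) := p(s y) = \sum_{|\alpha|\le d} s^{|\alpha|}\beta_\alpha\, y^\alpha$, which again has total degree at most $d$ in $n+m$ variables.

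First I would bound the supremum norm of $\tilde p$. Since $[-s,s]^{n+m}\subseteq X\times U$,
\[
\|\tilde p\|_{C([-1,1]^{n+m})} \;=\; \max_{y\in[-1,1]^{n+m}}|p(s y)| \;=\; \max_{z\in[-s,s]^{n+m}}|p(z)| \;\le\; \|p\|_{C(X\times U)}.
\]
Applying Lemma~\ref{lem:normBound1} to $\tilde p$ (with $n$ replaced by $n+m$) then gives $\|\tilde p\|_{\mathbb{R}[y]} \le 3^{\,d+1}\,\|\tilde p\|_{C([-1,1]^{n+m})} \le 3^{\,d+1}\,\|p\|_{C(X\times U)}$.

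Second I would translate this into a bound on $\|p\|_{\mathbb{R}[x,u]}$. The coefficients of $\tilde p$ are $s^{|\alpha|}\beta_\alpha$, so by the definition~(\ref{eq:Rnorm}) and using $s \le 1$ together with $|\alpha|\le d$,
\[
\|\tilde p\|_{\mathbb{R}[y]} \;=\; \max_{\alpha}\frac{s^{|\alpha|}|\beta_\alpha|}{\binom{|\alpha|}{\alpha}} \;\ge\; s^{\,d}\max_{\alpha}\frac{|\beta_\alpha|}{\binom{|\alpha|}{\alpha}} \;=\; s^{\,d}\,\|p\|_{\mathbb{R}[x,u]}.
\]
Combining the two displays yields $s^{\,d}\,\|p\|_{\mathbb{R}[x,u]} \le 3^{\,d+1}\,\|p\|_{C(X\times U)}$, that is, $\|p\|_{\mathbb{R}[x,u]} \le 3^{\,d+1} r^{\,d}\,\|p\|_{C(X\times U)} = k(d)\,\|p\|_{C(X\times U)}$, which is the claim.

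There is essentially no hard step here; the only point requiring care is that the norm $\|\cdot\|_{\mathbb{R}[\cdot]}$ does not transform homogeneously under the scaling $z\mapsto sy$ because of the multinomial weights $\binom{|\alpha|}{\alpha}$, but since we only need an upper bound and $s\in(0,1]$, the crude estimate $s^{|\alpha|}\ge s^{\,d}$ is enough. As a sanity check, $r\ge 1$ gives $k(d)\ge 3^{\,d+1}\ge 1$, so the corollary is consistent with Lemma~\ref{lem:normBound1} in the special case $X\times U = [-1,1]^{n+m}$.
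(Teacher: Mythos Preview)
Your proof is correct and essentially identical to the paper's own argument: both rescale $p$ by the factor $r^{-1}$ (your $s$), use the inclusion $[-1/r,1/r]^{n+m}\subset X\times U$ to bound the sup norm, apply Lemma~\ref{lem:normBound1}, and use $r^{-|\alpha|}\ge r^{-d}$ to relate the $\|\cdot\|_{\mathbb R[\cdot]}$ norms. The only difference is notation.
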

\begin{proof}
Set $\tilde{p}((x,u)) := p(r^{-1}(x,u))$. Then we have
\begin{equation}\label{eq:aux1}
\| \tilde{p}\|_{C([-1,1]^{n+m})} = \| p \|_{C([-1/r,1/r]^{n+m})} \le \| p\|_{C(X\times U)}
\end{equation}
since $[-1/r,1/r]^{n+m} \subset X\times U$ by definition of $r$~(\ref{eq:rdef}). In addition
\begin{equation}\label{eq:aux2}
\| \tilde{p}\|_{\mathbb{R}[x,u]} = \max_{\alpha} r^{-|\alpha|}\frac{|\beta_\alpha|}{\binom{|\alpha|}{\alpha}} = r^{-d}\max_{\alpha} r^{d-|\alpha|}\frac{|\beta_\alpha|}{\binom{|\alpha|}{\alpha}} \ge r^{-d}\| p \|_{\mathbb{R}[x,u]}.
\end{equation}
Combining (\ref{eq:aux1}), (\ref{eq:aux2}) and Lemma~\ref{lem:normBound1} we get
\[
\| p\|_{\mathbb{R}[x,u]} \le r^d \| \tilde p\|_{\mathbb{R}[x,u]} \le 3^{d+1} r^d\| \tilde p\|_{C([-1,1]^{n+m})} \le 3^{d+1} r^d  \|p\|_{C(X\times U)} = k(d) \|p\|_{C(X\times U)}.
\]
as desired.
\end{proof}

Now we turn to analyzing the Bellman inequality~(\ref{eq:bellman}). The following immediate property of this inequality will be of importance:
\begin{lemma}\label{lem:shift}
Let $V$ satisfy (\ref{eq:bellman}) and let $a \in \mathbb R$. Then $\tilde{V} := V - a$ satisfies
\[
l - \beta \tilde{V} + \nabla \tilde{V} \cdot f \ge \beta a \quad \forall\,(x,u) \in X\times U.
\]
\end{lemma}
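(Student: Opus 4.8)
The plan is to substitute $\tilde V := V - a$ directly into the left-hand side of the claimed inequality and exploit the fact that the Bellman expression $V \mapsto l - \beta V + \nabla V \cdot f$ is affine in $V$. First I would note that since $a$ is a constant, $\nabla \tilde V = \nabla(V - a) = \nabla V$, so the transport term is unaffected by the shift. Consequently, for all $(x,u) \in X \times U$,
\[
l - \beta \tilde V + \nabla \tilde V \cdot f \;=\; l - \beta(V - a) + \nabla V \cdot f \;=\; \bigl( l - \beta V + \nabla V \cdot f \bigr) + \beta a .
\]
Then I would invoke the hypothesis that $V$ satisfies~(\ref{eq:bellman}), i.e. $l - \beta V + \nabla V \cdot f \ge 0$ pointwise on $X \times U$, which immediately yields $l - \beta \tilde V + \nabla \tilde V \cdot f \ge \beta a$ on $X \times U$, as asserted.

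There is no genuine obstacle: the lemma simply records the elementary observation that subtracting the constant $a$ from a subsolution of the Hamilton--Jacobi--Bellman inequality produces a subsolution with a uniform margin $\beta a$. Its purpose is downstream — it will let one convert the strict positivity margin needed to apply Theorem~\ref{thm:nie} (membership in the truncated quadratic module) into a controlled downward shift of the polynomial approximant, whose contribution to the $L^1$ error with respect to $\mu_0$ is then bounded by $\beta a$ times $\mu_0(X)$. The only point to be mindful of is the slightly abusive notation $V(x,u)$ in~(\ref{eq:bellman}) for a function $V$ depending on $x$ alone; this is immaterial to the computation above, since the gradient is taken in $x$ and the constant shift commutes with it regardless.
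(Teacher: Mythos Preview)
Your proof is correct and coincides with the paper's own argument: both substitute $\tilde V = V - a$, use $\nabla\tilde V = \nabla V$, and invoke (\ref{eq:bellman}) to obtain the $\beta a$ margin. Your additional commentary on the downstream purpose of the lemma is accurate but not part of the proof itself.
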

\begin{proof}
We have $l - \beta \tilde{V} + \nabla \tilde{V} \cdot f = l - \beta V + \nabla V \cdot f + \beta a \ge \beta a$, 
since $V$ satisfies~(\ref{eq:bellman}).
\end{proof}

We will also need a result which estimates the distance between the best polynomial approximation of a given degree to the value function and polynomials of the same degree satisfying Bellman's inequality. A similar result in discrete time and with discrete state and control spaces can be found in~\cite{vanRoy}.
\begin{lemma}\label{lem:dp_bound}
Let
\[
\hat{V}_d \in \argmin_{V \in \mathbb{R}[x]_d} \| V - V^\star \|_{C^1(X)}.
\]
Then there exists a polynomial $\tilde{V}_d \in \mathbb{R}[x]_d$ satisfying (\ref{eq:bellman}) and such that
\begin{equation}\label{eq:Vtilde_bound}
\| \tilde{V}_d - V^\star   \|_{C^1(X)} \le \| \hat{V}_d - V^\star   \|_{C^1(X)} \Big(2 + \frac{\| f\|_{C^0(X)}}{\beta}\Big).
\end{equation}
\end{lemma}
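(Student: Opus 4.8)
The plan is to obtain $\tilde V_d$ by subtracting from the best $C^1$-approximant $\hat V_d$ a suitable constant, chosen just large enough to restore the Bellman inequality, and then to use Lemma~\ref{lem:shift} to check that this shift costs essentially nothing in the $C^1$ norm.

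First I would record that $V^\star$ itself satisfies the Bellman inequality~(\ref{eq:bellman}): under Assumption~\ref{as:main}(d) the function $V^\star$ is continuously differentiable, and under Assumption~\ref{as:main}(e) it is the value function of the relaxed problem and hence solves the Hamilton--Jacobi--Bellman equation $\beta V^\star(x) = \inf_{u\in U}\{l(x,u) + \nabla V^\star(x)\cdot f(x,u)\}$, so in particular $l(x,u) - \beta V^\star(x) + \nabla V^\star(x)\cdot f(x,u) \ge 0$ for all $(x,u)\in X\times U$. Writing $\varepsilon := \|\hat V_d - V^\star\|_{C^1(X)}$, so that both $\max_{x\in X}|\hat V_d(x) - V^\star(x)| \le \varepsilon$ and $\max_{x\in X}\|\nabla(\hat V_d - V^\star)(x)\|_2 \le \varepsilon$, the triangle inequality together with the Cauchy--Schwarz bound $\nabla h\cdot f \le \|\nabla h\|_2\|f\|_2$ gives, for all $(x,u)\in X\times U$,
\[
l - \beta\hat V_d + \nabla\hat V_d\cdot f = (l - \beta V^\star + \nabla V^\star\cdot f) - \beta(\hat V_d - V^\star) + \nabla(\hat V_d - V^\star)\cdot f \ge -\varepsilon\big(\beta + \|f\|_{C^0(X)}\big).
\]

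Next I would set $a := \varepsilon\big(1 + \|f\|_{C^0(X)}/\beta\big)$ and define $\tilde V_d := \hat V_d - a \in \mathbb{R}[x]_d$. Repeating the one-line computation in the proof of Lemma~\ref{lem:shift} (which goes through verbatim even though $\hat V_d$ only satisfies the displayed lower bound rather than~(\ref{eq:bellman}) itself) yields $l - \beta\tilde V_d + \nabla\tilde V_d\cdot f = (l - \beta\hat V_d + \nabla\hat V_d\cdot f) + \beta a \ge -\varepsilon(\beta + \|f\|_{C^0(X)}) + \beta a = 0$, so $\tilde V_d$ satisfies~(\ref{eq:bellman}). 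Finally, since $\tilde V_d - V^\star = (\hat V_d - V^\star) - a$ and the $C^1(X)$-norm of the constant function $a$ equals $|a| = a$, the triangle inequality gives $\|\tilde V_d - V^\star\|_{C^1(X)} \le \varepsilon + a = \varepsilon\big(2 + \|f\|_{C^0(X)}/\beta\big)$, which is precisely~(\ref{eq:Vtilde_bound}).

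I do not expect a genuine obstacle here; the only points requiring care are (i) that $\hat V_d$ need not itself satisfy~(\ref{eq:bellman}), so Lemma~\ref{lem:shift} cannot be invoked as a black box but its short proof must be rerun with the error term $-\varepsilon(\beta+\|f\|_{C^0(X)})$ in place of $0$, and (ii) the justification that $V^\star\in C^1(X)$ and solves the HJB equation, which is exactly where Assumptions~\ref{as:main}(d) and (e) enter.
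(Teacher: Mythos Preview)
Your proof is correct and follows essentially the same route as the paper: shift the best $C^1$-approximant $\hat V_d$ by the constant $a=\varepsilon(1+\|f\|_{C^0}/\beta)$, use that $V^\star$ satisfies~(\ref{eq:bellman}) to bound the Bellman residual of $\hat V_d$ below by $-\varepsilon(\beta+\|f\|_{C^0})$, and conclude via the triangle inequality. If anything, your write-up is a bit more explicit than the paper's in justifying why $V^\star$ satisfies the HJB inequality and in spelling out the final $C^1$-norm estimate.
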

\begin{proof}
Let $\tilde{V}_d := \hat{V}_d - a$. We will find an $a \ge 0$ such that $\tilde{V}_d$ satisfies the Bellman inequality. We have
\begin{align*}
l - \beta \tilde{V}_d + \nabla \tilde{V}_d f &= l - \beta \hat{V}_d + \nabla \hat{V}_d f + \beta a \\ &= 
l - \beta V^\star + \nabla V^\star f + \beta(V^\star - \hat{V}_d) + (\nabla\hat{V}_d -\nabla V^\star)f   + \beta a \\
& \ge \beta(V^\star-\hat{V}_d)+ (\nabla\hat{V}_d -\nabla V^\star)f   + \beta a \\
& \ge -\beta\| \hat{V}_d - V^\star\|_{C^1(X)} - \| \hat{V}_d - V^\star\|_{C^1(X)}\|f\|_{C^0(X)} + \beta a,
\end{align*}
and hence if
\[
a := \Big(1 + \frac{\|f \|_{C^0(X)}}{\beta}\Big)\| \hat{V}_d - V^\star\|_{C^1(X)},
\]
then $\tilde{V}_d$ satisfies Bellman's inequality  (\ref{eq:bellman}) and estimate~(\ref{eq:Vtilde_bound}) holds.
\end{proof}

Now we are in position to prove our main result which bounds the gap, in $L^1$ norm, between the
value function $V^\star$ of the optimal control problem (\ref{ocp_basic}) and any optimal solution $V^{\star}_d$ of the sum-of-squares program (\ref{opt:lb_sos}):
\begin{theorem}\label{mainresult}
It holds that $ \|V^\star - V_d^\star \|_{L_1(\mu_0)} < \epsilon $ for all
integer \begin{align}
d  &\ge   c_2 \cdot \mathrm{exp}\Big[     \Big ( 6d_p(\epsilon)^2 (3r(n+m))^{d_{p}(\epsilon)}\frac{M + \beta\epsilon + \delta_1 \|f \|_{C^0}}{\beta\epsilon}  \Big)^{c_2}     \Big] \label{eq:nonasympt} \\ & =
O\Big(  \mathrm{exp}\big[\frac{1}{\epsilon^{3c_2}}(3(n+m)r)^{\frac{c_3}{\epsilon}}  \big]\Big), \label{eq:asympt}
\end{align}
where $d_p = \Big\lceil\frac{2c_1}{\epsilon}\Big(2+\frac{\|f\|_{C^0}}{\beta}\Big) + d_f \Big\rceil$, $M = \| l - \beta V^\star + \nabla V^\star \cdot f   \|_{C^0(X\times U)} < \infty$, $r$ is defined in~(\ref{eq:rdef}), $c_3 = 2c_1c_2(2\beta + \|f\|_{C^0})/\beta $, the constant $c_1$ depends only on $V^\star$ and $X$ and $U$, whereas the constant $c_2$ depends only on sets $X$ and $U$.
\end{theorem}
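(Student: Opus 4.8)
The plan is to reduce the statement to the construction of a \emph{single} polynomial $V$ that is feasible in (\ref{opt:lb_sos}) for every $d$ above the claimed threshold and whose value $\int_X V\,d\mu_0$ differs from $\int_X V^\star\,d\mu_0$ by at most $\epsilon$. This suffices because, by the definition of the $L_1(\mu_0)$ norm and since every feasible $V\le V^\star$ on $X$, we have $\|V^\star - V^\star_d\|_{L_1(\mu_0)} = \int_X V^\star\,d\mu_0 - \int_X V^\star_d\,d\mu_0$, and by optimality of $V^\star_d$ this is bounded by $\int_X V^\star\,d\mu_0 - \int_X V\,d\mu_0$ for any feasible $V$. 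I would also use at the outset that the truncated quadratic module $Q_D(X\times U)$ is nondecreasing in $D$, so once $l - \beta V + \nabla V\cdot f \in Q_{D_0}(X\times U)$ holds, the same $V$ stays feasible in (\ref{opt:lb_sos}) for every $d$ with $d+d_f\ge D_0$; hence it is enough to certify membership once.

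To construct $V$, first apply Theorem~\ref{thm:bagby} to $V^\star$ (admissible by Assumption~\ref{as:main}(d)) at the \emph{fixed} degree $d_V := d_p - d_f$. The definition of $d_p$ guarantees $d_V \ge \frac{2c_1}{\epsilon}(2 + \|f\|_{C^0}/\beta)$, so the best degree-$d_V$ $C^1$-approximant $\hat V_{d_V}$ of $V^\star$ satisfies $\|\hat V_{d_V} - V^\star\|_{C^1(X)} \le c_1/d_V$; feeding this into Lemma~\ref{lem:dp_bound} yields $\tilde V_{d_V}\in\mathbb{R}[x]_{d_V}$ satisfying the Bellman inequality (\ref{eq:bellman}) with $\delta_1 := \|\tilde V_{d_V} - V^\star\|_{C^1(X)} \le (c_1/d_V)(2 + \|f\|_{C^0}/\beta) \le \epsilon/2$. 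Now set $V := \tilde V_{d_V} - \epsilon/2$. By Lemma~\ref{lem:shift}, $\tilde p := l - \beta V + \nabla V\cdot f$ satisfies $\tilde p \ge \beta\epsilon/2 > 0$ on $X\times U$; using $l - \beta V^\star + \nabla V^\star\cdot f\ge 0$, the bound $\|\tilde V_{d_V}-V^\star\|_{C^1}\le\delta_1$ and $\delta_1\le\epsilon/2$ gives $\|\tilde p\|_{C^0(X\times U)} \le M + \beta\delta_1 + \delta_1\|f\|_{C^0} + \beta\epsilon/2 \le M + \beta\epsilon + \delta_1\|f\|_{C^0}$, while $\deg\tilde p \le d_p$ and $\int_X(V^\star - V)\,d\mu_0 \le \delta_1 + \epsilon/2 \le \epsilon$, so the objective gap is controlled.

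It remains to certify $\tilde p \in Q_{d+d_f}(X\times U)$ for $d$ above the threshold. Combining Corollary~\ref{cor:norms} (so that $\|\tilde p\|_{\mathbb{R}[x,u]} \le k(d_p)\|\tilde p\|_{C^0}$ with $k(d_p)=3^{d_p+1}r^{d_p}$) with the positivity margin $\beta\epsilon/2$ gives
\[
d_p^2 (n+m)^{d_p}\,\frac{\|\tilde p\|_{\mathbb{R}[x,u]}}{\beta\epsilon/2} \;\le\; d_p^2 (n+m)^{d_p}\,\frac{2\cdot 3^{d_p+1}r^{d_p}\big(M + \beta\epsilon + \delta_1\|f\|_{C^0}\big)}{\beta\epsilon} \;=\; 6\,d_p^2\,(3r(n+m))^{d_p}\,\frac{M + \beta\epsilon + \delta_1\|f\|_{C^0}}{\beta\epsilon}.
\]
Then Theorem~\ref{thm:nie} yields $\tilde p \in Q_{d+d_f}(X\times U)$ as soon as $d+d_f$ — and hence $d$, once $d_f$ is absorbed and one notes this bound dwarfs $d_V$ — exceeds the right-hand side of (\ref{eq:nonasympt}). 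For such $d$, $V$ is feasible in (\ref{opt:lb_sos}) and the first paragraph gives $\|V^\star - V^\star_d\|_{L_1(\mu_0)} \le \epsilon$. The asymptotic form (\ref{eq:asympt}) follows by substituting $d_p = \Theta(1/\epsilon)$: since the numerator $M + \beta\epsilon + \delta_1\|f\|_{C^0}$ is $\Theta(1)$, the argument of the outer exponential becomes $\big(\Theta(\epsilon^{-3})\,(3r(n+m))^{\Theta(1/\epsilon)}\big)^{c_2}$, and the leading-order exponent $c_2 d_p$ contributes the base $(3(n+m)r)^{c_3/\epsilon}$ with $c_3$ as in the statement.

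The hard part is not any single estimate but the \emph{order of quantifiers}: the degree $d_V$ of the polynomial approximation must be fixed as a function of $\epsilon$ and the problem data alone — not of the hierarchy index $d$ — before invoking Theorem~\ref{thm:nie}, since the Nie--Schweighofer degree bound depends on $d_p = \deg\tilde p$ in two ways, through the factor $d_p^2(n+m)^{d_p}$ and through $\|\tilde p\|_{\mathbb{R}[x,u]}\le k(d_p)\|\tilde p\|_{C^0}$; letting $d_V$ grow with $d$ would make the condition on $d$ circular. The remaining work is careful bookkeeping: propagating the $C^1$ error $c_1/d_V$ through Lemmas~\ref{lem:shift} and~\ref{lem:dp_bound} into both the positivity margin and the $C^0$ bound on $\tilde p$, and merging the factor $3^{d_p+1}r^{d_p}$ from Corollary~\ref{cor:norms} with $(n+m)^{d_p}$ into the constant $(3r(n+m))^{d_p}$ of (\ref{eq:nonasympt}).
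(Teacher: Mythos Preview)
Your proof is correct and follows essentially the same route as the paper: approximate $V^\star$ in $C^1$ by a polynomial of degree $\tilde d = d_p - d_f$ (via Theorem~\ref{thm:bagby}), shift it down first by the constant of Lemma~\ref{lem:dp_bound} to enforce the Bellman inequality and then by $\epsilon/2$ to create a strict positivity margin $\beta\epsilon/2$, bound $\|\tilde p\|_{C^0}$ by $M+\beta\epsilon+\delta_1\|f\|_{C^0}$, convert to the $\|\cdot\|_{\mathbb{R}[x,u]}$ norm via Corollary~\ref{cor:norms}, and invoke Theorem~\ref{thm:nie}. Your explicit use of the monotonicity of $Q_D$ and of the optimality of $V_d^\star$ to pass from any feasible $V$ to the bound on $\|V^\star - V_d^\star\|_{L_1}$, as well as your identification of the undefined symbol $\delta_1$ in the statement with $\|\tilde V_{d_V}-V^\star\|_{C^1(X)}\le\epsilon/2$, are welcome clarifications that the paper leaves implicit.
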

\begin{proof}
According to Theorem~\ref{thm:bagby} and Lemma~\ref{lem:dp_bound} we can find a polynomial $\tilde{V}_{\tilde{d}}$ of degree no more than \[\tilde{d} = \Big\lceil\frac{2c_1}{\epsilon}\Big(2+\frac{\|f\|_{C^0}}{\beta}\Big)\Big\rceil  \] such that $\| V^\star - \tilde{V}_{\tilde{d}} \|_{C^1} \le \frac{\epsilon}{2} $ and such that $\tilde{V}_{\tilde{d}}$ satisfies the Bellman inequality~(\ref{eq:bellman}). Let $V$ be an arbitrary polynomial feasible in~(\ref{opt:lb_sos}) for some $d \ge 0$. Then
\begin{equation}\label{eq:bounds}
\| V^\star - V_d^\star \|_{L_1} \le \| V^\star - V \|_{L_1} \le \| V^\star - V \|_{C^0} \le \| V^\star - \tilde{V}_{\tilde{d}} \|_{C^0} +  \| V - \tilde{V}_{\tilde{d}} \|_{C^0} \le \frac{\epsilon}{2} + \| V - \tilde{V}_{\tilde{d}} \|_{C^0}.
\end{equation}
Hence, the goal is to find a degree $d \ge 0$ and a polynomial $V$ feasible in~(\ref{opt:lb_sos}) for that $d$ satisfying $\| V - \tilde{V}_{\tilde{d}} \|_{C^0} \le \epsilon/2$. Setting $V := \tilde{V}_{\tilde{d}} - \epsilon/2$, we clearly have $\| V - \tilde{V}_{\tilde{d}} \|_{C^0} \le \epsilon/2$; in addition, using Lemma~\ref{lem:shift} we know that
\begin{equation}\label{eq:lowerBound}
l - \beta V + \nabla V \cdot f \ge  \frac{1}{2}\beta\epsilon > 0
\end{equation}
and hence $V$ strictly satisfies the Bellman inequality and as a consequence of the Putinar's Positivstellensatz~\cite{putinar93} there exists a degree $d \ge 0$ such that $V$ is feasible in~(\ref{opt:lb_sos}). To bound the degree $d$ we apply the bound of Theorem~\ref{thm:nie} on $p :=  l - \beta V + \nabla V \cdot f$. From~(\ref{eq:lowerBound}) we know that $p_{\mathrm{min}} \ge \frac{1}{2}\beta\epsilon $. Next, we need to bound $\| p\|_{\mathbb{R}[x,u]}$ by bounding $\|  p \|_{C^0(X\times U)}$ and using Corollary~\ref{cor:norms}. We have
\begin{align*}
\| p\|_{C^0} &=\| l - \beta V + \nabla V \cdot f \|_{C^0} =  \| l - \beta \tilde{V}_{\tilde{d}} + \nabla \tilde{V}_{\tilde{d}} \cdot f + \frac{1}{2}\beta\epsilon \|_{C^0} \\ 
& \le  \| l - \beta V^\star + \nabla V^\star \cdot f +  \|_{C^0} + \beta\|V^\star - \tilde{V}_{\tilde{d}} \|_{C^0} + \| V^\star- \tilde{V}_{\tilde{d}} \|_{C^1}\|f\|_{C^0} + \frac{1}{2}\beta\epsilon \\
& \le M + \frac{1}{2}\beta\epsilon + \frac{1}{2}\epsilon\| f\|_{C^0} +\frac{1}{2}\beta \epsilon = M + \beta\epsilon + \frac{1}{2}\epsilon \|f \|_{C^0}.
\end{align*}
Finally, we need to estimate the degree of $p$. We have (assuming without loss of generality that $\tilde{d} + d_f -1 \ge \mathrm{deg}(l) $)
\[
\mathrm{deg}(p) =  \mathrm{deg} \big(l - \beta V + \nabla V \cdot f  \big) \le \tilde{d} + d_f - 1 \le \Big\lceil\frac{2c_1}{\epsilon}\Big(2+\frac{\|f\|_{C^0}}{\beta}\Big) + d_f \Big\rceil
\]
Setting $d_p := \Big\lceil\frac{2c_1}{\epsilon}\Big(2+\frac{\|f\|_{C^0}}{\beta}\Big) + d_f  \Big\rceil$ and using Theorem~\ref{thm:nie} and Corollary~\ref{cor:norms}, we conclude that for
\[
d \ge  c_2 \cdot \mathrm{exp}\Big[     \Big ( 2d_p^2 (n+m)^{d_{p}}\frac{k(d_p)(M + \beta\epsilon + \delta_1 \|f \|_{C^0})}{\beta\epsilon}  \Big)^{c_2}     \Big],
\]
the polynomial $V$ is feasible in~(\ref{opt:lb_sos}). Since $\|\tilde{V}_{\tilde{d}} - V \|_{C^0} \le \frac{\epsilon}{2}$, we conclude from~(\ref{eq:bounds}) that $\| V^\star - V_d^\star \|_{L_1} \le \epsilon$.
Inserting the expression for $k(d) = 3^{d+1}r^d $ from Corollary~\ref{cor:norms} yields~(\ref{eq:nonasympt}) and carrying out asymptotic analysis for $\epsilon \to 0$ yields
\[
d \ge O\Big(  \mathrm{exp}\big[\frac{1}{\epsilon^{3c_2}}(3(n+m)r)^{\frac{c_3}{\epsilon}}  \big]\Big),
\]
which is~(\ref{eq:asympt}).


 

\end{proof}

\begin{corollary}\label{rate}
It holds $\|V^\star - V^\star_d \|_{L_1(\mu_0)} = O(1/\log\log\,d)$.
\end{corollary}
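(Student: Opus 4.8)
The plan is to invert the non-asymptotic bound in Theorem~\ref{mainresult}, trading the roles of $d$ and $\epsilon$. Theorem~\ref{mainresult} asserts that $\|V^\star - V_d^\star\|_{L_1(\mu_0)} < \epsilon$ as soon as $d$ exceeds a threshold of the form $d \ge \exp\bigl[\exp(c_4/\epsilon)\bigr]$ for some constant $c_4$ depending only on the data; this follows from~(\ref{eq:asympt}) since $(3(n+m)r)^{c_3/\epsilon} = \exp\bigl[(c_3/\epsilon)\log(3(n+m)r)\bigr]$ and the factor $\epsilon^{-3c_2}$ is dominated by a double exponential. First I would fix an arbitrary $d$ large enough, solve the inequality $d \ge \exp[\exp(c_4/\epsilon)]$ for $\epsilon$, obtaining that it suffices to take $\epsilon = c_4 / \log\log d$ (up to adjusting constants and handling the ceilings and small-$d$ regime). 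Plugging this $\epsilon$ into the conclusion of Theorem~\ref{mainresult} gives $\|V^\star - V_d^\star\|_{L_1(\mu_0)} < c_4/\log\log d$, which is exactly the claimed $O(1/\log\log d)$ rate.

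More concretely, the steps are: (i) extract from~(\ref{eq:asympt}) a clean statement of the form ``there exist constants $c_4, d_1$ such that for every $\epsilon \in (0,1)$, if $d \ge \exp[\exp(c_4/\epsilon)]$ and $d \ge d_1$ then $\|V^\star - V_d^\star\|_{L_1(\mu_0)} < \epsilon$''; (ii) given $d$ with $\log\log d \ge c_4$ (so that the chosen $\epsilon$ is in $(0,1)$), set $\epsilon := c_4/\log\log d$ and verify $d \ge \exp[\exp(c_4/\epsilon)] = \exp[\exp(\log\log d)] = \exp[\log d] = d$, which holds with equality; (iii) conclude $\|V^\star - V_d^\star\|_{L_1(\mu_0)} < c_4/\log\log d$ for all sufficiently large $d$, i.e. $\|V^\star - V_d^\star\|_{L_1(\mu_0)} = O(1/\log\log d)$. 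One should also note monotonicity is not needed: the bound is applied separately for each $d$.

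The main obstacle — really the only delicate point — is bookkeeping the constants and the ceiling functions so that the double-exponential inversion is honest. In particular $d_p(\epsilon) = \lceil (2c_1/\epsilon)(2+\|f\|_{C^0}/\beta) + d_f\rceil$ grows like $1/\epsilon$, so $d_p(\epsilon)^2 (3r(n+m))^{d_p(\epsilon)}$ is $\exp(\Theta(1/\epsilon))$, and raising the whole bracket to the power $c_2$ and exponentiating keeps it a clean double exponential $\exp[\exp(\Theta(1/\epsilon))]$; the polynomial prefactors ($\epsilon^{-3c_2}$, the leading $c_2$, additive constants like $M + \beta\epsilon$) only perturb the inner constant and are absorbed by enlarging $c_4$. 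I would therefore present the argument at the level of ``$d \ge \exp[\exp(c_4/\epsilon)] \Rightarrow \|V^\star-V_d^\star\|_{L_1} < \epsilon$, hence $\epsilon = c_4/\log\log d$ works'', citing~(\ref{eq:asympt}) for the double-exponential form and leaving the constant-chasing implicit, exactly as the $O(\cdot)$ notation permits.
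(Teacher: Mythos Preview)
Your proposal is correct and follows essentially the same route as the paper: invert the asymptotic bound~(\ref{eq:asympt}) by absorbing the sub-dominant factor $\epsilon^{-3c_2}$ into the exponential term, reducing the threshold to a clean double exponential $\exp[\exp(c_4/\epsilon)]$, and then solve for $\epsilon$ in terms of $d$. The paper does the absorption via the explicit inequality $(3(n+m)r)^{2c_3/\epsilon} \ge \epsilon^{-3c_2}(3(n+m)r)^{c_3/\epsilon}$ for small $\epsilon$, which is exactly your ``enlarging $c_4$'' step made concrete.
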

\begin{proof}
Follows by inverting the asymptotic expression (\ref{eq:asympt}) using the fact that \[(3(n+m)r)^{\frac{2c_3}{\epsilon}} \ge \frac{1}{\epsilon^{3c_2}} (3(n+m)r)^{\frac{c_3}{\epsilon}}\] for small $\epsilon$.
\end{proof}

\section{Discussion}
The bound on the convergence rate $O(1/\log\log\, d)$ should be compared with the bound $O(1/\displaystyle\sqrt[c_2]{\log\,d})$ derived in~\cite{ns07} for \emph{static} polynomial optimization problems (here $c_2 \ge 1 $ is the, in general, unknown constant from Theorem~\ref{thm:nie}). The additional logarithm appearing in our bound seems to be unavoidable due to fundamental results of approximation theory (known as Bernstein inequalities) implying that Lipschitz continuous functions cannot be approximated by polynomials with rate faster than $1/d$ (in the sense that there exists a Lipschitz continuous function whose best degree-$d$ approximation converges to $f$ with the rate exactly $C / d$, $C > 0$, in the supremum norm on $[-1,1]^n$); this implies the $1/\epsilon$ dependence of $d_p$ from Theorem~\ref{mainresult} which then propagates to doubly exponential dependence on $1/\epsilon$ through Theorem~\ref{thm:nie}.

Therefore the primary point of improvement of the bound from Theorem~\ref{mainresult} and Corollary~\ref{rate} is the fundamental bound of Theorem~\ref{thm:nie} derived in~\cite{ns07}. As the authors of~\cite{ns07} remark, this bound is far from tight, at least in two special cases: the univariate case (i.e., $n+m = 1$ in our setting) or the case of a single constraint defining $X\times U$. In these cases the exponential in~(\ref{eq:schweigBound}) can be dropped, which results in $O(1/\log\, d)$ asymptotic rate of convergence in Corollary~\ref{rate}. In the general case, however, it is unknown whether the exponential in (\ref{eq:schweigBound}) can be dropped or whether the bound~(\ref{eq:schweigBound}) can be improved otherwise~\cite{ns07}.

\section{Extensions}~\label{sec:extension}
The approach for deriving this bound can be extended to other settings. In particular, similar bounds, with identical $O(1/\log\log\,d)$ asymptotics, hold for the finite-horizon version of the problem, both in continuous and discrete time, as well as for the discounted discrete-time infinite-horizon variant (the former was treated using the moment-sum-of-squares approach, in continuous time, in~\cite{lhpt08} and the latter was treated in~\cite{savorgnan}). The derivation in discrete-time is completely analogous and the results hold under milder assumptions (Assumption~1 $(d)$ can be replaced by $V^\star$ Lipschitz and Assumption 1 $(e)$ can be dropped completely). For the finite-horizon continuous-time problem, the only difference is in Lemma~\ref{lem:dp_bound}, where the constant shift $\tilde{V}(x) = \hat{V}(x) - a$,  is replaced by the affine shift $\tilde{V}(t,x) = \hat{V}(t,x) - a - b(T-t)$ for suitable $a > 0$, $b >0$ ensuring that $\tilde{V}$ satisfies the corresponding finite-time Bellman inequality and its boundary condition (hence the two degrees of freedom).

\end{document}